\documentclass[letterpaper,english,11pt]{amsart}

\usepackage[centering]{geometry}
\linespread{1.02}
\frenchspacing 

\usepackage[french,main=english]{babel}
\usepackage[utf8, latin1]{inputenc} 
\usepackage[T1]{fontenc}
\usepackage[cm]{aeguill}   
\usepackage{times,newtxmath,ae,euscript,enumerate,dsfont}

\newcommand{\id}{\mathcal{I}(M)}
\newcommand{\st}{\mathcal{S}(M)}

\newcommand{\HK}{\mathcal{HK}}
\newcommand{\K}{\mathcal{K}}
\newcommand{\h}{\mathcal{H}}

\usepackage[dvipsnames]{xcolor}   
\usepackage{xparse}
\usepackage{xr-hyper}
\usepackage[linktocpage=true,colorlinks=true,hyperindex,citecolor=RoyalBlue,linkcolor= RoyalBlue]{hyperref}   

\theoremstyle{plain}
\newtheorem{theorem}{Theorem}[section] 
\newtheorem{proposition}[theorem]{Proposition}
\newtheorem{lemma}[theorem]{Lemma}
\newtheorem{corollary}[theorem]{Corollary}
\theoremstyle{definition}

\numberwithin{equation}{section}

\begin{document}
\author{A. Goswami}
\address{
[1] Department of Mathematics and Applied Mathematics, University of Johannesburg, P.O. Box 524, Auckland Park 2006, South Africa. [2] National Institute for Theoretical and Computational Sciences (NITheCS), South Africa.}
\email{agoswami@uj.ac.za}

\title{Terminal spaces of monoids}

\subjclass{20M12, 20M14, 54F65}



\keywords{strongly irreducible ideals; arithmetic monoids; Zariski topology; generic points}

\begin{abstract}
The purpose of this note is a wide generalization of the topological results of various classes of ideals of rings, semirings, and modules, endowed with Zariski topologies, to strongly irreducible ideals (endowed with Zariski topologies) of monoids, called terminal spaces. We show that terminal spaces are $T_0$, quasi-compact, and every nonempty irreducible closed subset has a unique generic point. We characterize arithmetic monoids in terms of terminal spaces. Finally, we provide necessary and sufficient conditions for the subspaces of maximal and prime ideals to be dense in the corresponding terminal spaces.
\end{abstract}
\maketitle

\section{Introduction and Preliminaries}

Under the name primitive ideals, in \cite{F49}, the notion of strongly irreducible ideals was introduced for commutative rings. In \cite[p.\,301, Exercise 34]{B72}, the ideals of the same spectrum are called quasi-prime ideals. The term ``strongly irreducible'' was first used for noncommutative rings in \cite{B53}. Since then, several algebraic and topological studies have been done on these types of ideals of rings (see \cite{HRR02, A08, S16}). The notion of strongly irreducible ideals has been generalized to semirings (see \cite{I56, AA08}) and modules (see \cite{KES06, NS21}).

The aim of this note is to study the topological properties of the space of strongly irreducible ideals of a monoid endowed with a Zariski topology. This is a wide generalization of Zariski spaces. Moreover, strongly irreducible ideals are the ``largest'' class of ideals on which one can impose a Zariski topology. Therefore, we not only generalize some of the topological results from the above-mentioned works on strongly irreducible ideals of rings, semirings, and semimodules to monoids but also from maximal, prime, minimal prime, and primary ideals of those structures to strongly irreducible ideals of monoids. We highlight the results that have been generalized here. Although our setup is on monoids, many of the results still hold for (commutative) semigroups, which cannot be further generalized to magmas.

By a monoid $M $, we shall mean a system $(M , \cdot,1)$ such that $\cdot$ is an associative, commutative  (multiplicative) binary operation on $M $ and $1$ is an element of $M $ such that $x\cdot 1=x$ for all $x\in M $. We shall write $xy$ for $x\cdot y$ and we shall assume all our monoids are commutative. An element $m$ of $M$ is called \emph{invertible} if $mm'=1,$ for some $m'\in M.$ If $S$ and $T$ are subsets of a monoid $M$, then by the \emph{set product} $ST$ of $S$ and $T$ we shall mean $ST=\{st\mid s\in S, t\in T\}.$ If $S=\{s\}$ we write $ST$ as $sT$, and similarly for $T=\{t\}.$ Thus
\[ST=\cup\{ St\mid t\in T\}=\cup \{sT\mid s\in S\}.\]

An \emph{ideal}  of a monoid $M $ is a subset $I$ such that $i\in I$ and
$m\in M $ implies $im \in I$, and $I$ is called a \emph{proper} ideal if $I\neq M $. By $\id$, we shall denote the set of all ideals of $M$. A monoid $M $
is called \emph{Noetherian} if it satisfies the
ascending chain condition on ideals. If $S$ is a nonempty subset of a monoid $M$, then $\langle S\rangle$ will denote the smallest ideal generated by $S$. If $S=\{s\},$ then we shall write $\langle s \rangle$ for $\langle \{s\} \rangle$. A proper ideal I is called \emph{prime} if $i i' \in I$
implies $i\in I$ or $i'\in I$. If $I$ and $J$ are ideals of $M $, then their \emph{product} is defined by $IJ=\{ij\mid i\in I, j\in J\}$, which is also an ideal of $M $. Let $S$ be a nonempty subset of a monoid $M$.  If $I$ is an ideal of $M $, the \emph{radical} of $I$ is defined by \[\sqrt{I}=\{m\in M \mid m^k\in I\;\text{for some}\;k\in \mathds{Z}^+\}.\] An ideal $I$ is said to be a \emph{radical ideal} (or to be a \emph{semiprime}) if $\sqrt{I}=I.$  A proper  ideal $L$ of $M $ is called \emph{irreducible} if $L=I\cap J$ (for ideals $I, J$ of $M$) implies $L=I$ or $L=J.$ A proper ideal $K$ of $M$ is said to be \emph{strongly irreducible} if $I\cap J\subseteq K$ implies $I\subseteq K$ or $J\subseteq K$. 

\section{Terminal spaces}

Let $M$ be a monoid and let $\st$ be the set of all strongly irreducible ideals of $M$. We impose a Zariski topology (in the sense of \cite[\textsection 1.1.1]{G60}) on $\st$ by defining closed sets by
\begin{equation}\label{kcp}
\HK(X)=\begin{cases}
\{J\in \st \mid J\supseteq \K(X)\},& X\neq \emptyset;
\\ \emptyset,& X=\emptyset,
\end{cases}
\end{equation}
where $X\subseteq \st$ and $\K(X)=\bigcap_{I\in X} I$.
The following theorem shows that  $\HK$ is a Kuratowski closure operator on $\st$, and hence indeed induces a closed-set topology on $\st$. 

\begin{theorem}\label{klo}
Let $M$ be a module and let  $\HK$ be defined as in $($\ref{kcp}$)$.
\begin{enumerate}
	
\item $\HK(\emptyset)=\emptyset$.

\item\label{xhkx} For all $X\subseteq \st,$ $X\subseteq \HK(X).$

\item For all $X\subseteq \st,$ $\HK(\HK(X))=\HK(X)$.

\item\label{vss} For all $X, X'\subseteq \st,$ $\HK(X\cup X')=\HK(X)\cup \HK(X').$
\end{enumerate}
\end{theorem}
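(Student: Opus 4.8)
The plan is to check the four Kuratowski axioms directly from the definition \eqref{kcp}; the only non-formal ingredient is strong irreducibility, and it will enter exactly once, in part (4). Throughout I will use two elementary observations. First, an arbitrary intersection of ideals of $M$ is again an ideal of $M$: if $i$ lies in every member of a family $\{I_\alpha\}$ and $m\in M$, then $im\in I_\alpha$ for all $\alpha$, so $im\in\bigcap_\alpha I_\alpha$; in particular $\K(X)$ is an ideal of $M$ for every nonempty $X\subseteq\st$. Second, $X\mapsto\K(X)$ is inclusion-reversing, hence $X\mapsto\HK(X)$ is inclusion-preserving: if $X\subseteq Y$ then $\K(Y)=\bigcap_{I\in Y}I\subseteq\bigcap_{I\in X}I=\K(X)$, so $\HK(X)=\{J\in\st\mid J\supseteq\K(X)\}\subseteq\{J\in\st\mid J\supseteq\K(Y)\}=\HK(Y)$. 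Axiom (1) is immediate from the definition.

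For (2), let $X\neq\emptyset$ and $I\in X$; then $\K(X)=\bigcap_{I'\in X}I'\subseteq I$, so $I\supseteq\K(X)$ and thus $I\in\HK(X)$; the case $X=\emptyset$ is trivial. For (3), the inclusion $\HK(X)\subseteq\HK(\HK(X))$ is just (2) applied to the set $\HK(X)$ in place of $X$. For the reverse inclusion we may assume $X\neq\emptyset$ (otherwise both sides are $\emptyset$), so that $\HK(X)\neq\emptyset$ by (2). Every $J\in\HK(X)$ satisfies $J\supseteq\K(X)$, hence $\K(\HK(X))=\bigcap_{J\in\HK(X)}J\supseteq\K(X)$; consequently any $J'\in\st$ with $J'\supseteq\K(\HK(X))$ also satisfies $J'\supseteq\K(X)$, i.e. $J'\in\HK(X)$. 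This gives $\HK(\HK(X))\subseteq\HK(X)$.

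The substantial step is (4). The inclusion $\HK(X)\cup\HK(X')\subseteq\HK(X\cup X')$ is immediate from monotonicity, since $X\subseteq X\cup X'$ and $X'\subseteq X\cup X'$. For the converse, if one of $X,X'$ is empty — say $X'=\emptyset$ — then $X\cup X'=X$ and $\HK(X')=\emptyset$, so there is nothing to prove; assume both are nonempty. Then $\K(X\cup X')=\bigl(\bigcap_{I\in X}I\bigr)\cap\bigl(\bigcap_{I\in X'}I\bigr)=\K(X)\cap\K(X')$, and $\K(X),\K(X')$ are ideals of $M$ by the first observation. Now take any $J\in\HK(X\cup X')$; then $J$ is a strongly irreducible ideal with $\K(X)\cap\K(X')=\K(X\cup X')\subseteq J$, so by the definition of strong irreducibility $\K(X)\subseteq J$ or $\K(X')\subseteq J$, that is, $J\in\HK(X)$ or $J\in\HK(X')$. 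Hence $\HK(X\cup X')\subseteq\HK(X)\cup\HK(X')$, which completes (4).

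I expect (4) — more precisely the inclusion $\HK(X\cup X')\subseteq\HK(X)\cup\HK(X')$ — to be the only place where real content is needed: it is exactly here that ``strongly irreducible'' is invoked, and finite additivity of the closure operator would fail for merely irreducible (or prime, maximal, etc.) ideals, which is why $\st$ is the natural carrier of this Zariski topology. One minor bookkeeping point is the behaviour of the empty set: if $\K(X)=\emptyset$ for some infinite $X$ (which can happen when ideals are not required to contain an absorbing element) then $\HK(X)=\st$, but each of (1)--(4) still goes through verbatim, the relevant set inclusions becoming vacuous; and under the convention that ideals are nonempty the same argument applies unchanged.
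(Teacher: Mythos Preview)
Your proof is correct and follows essentially the same route as the paper's: parts (1)--(2) directly from the definition, part (3) via (2) plus monotonicity of $\HK$ for one inclusion and the definition for the other, and part (4) via monotonicity for one inclusion and strong irreducibility applied to $\K(X)\cap\K(X')\subseteq J$ for the other. You have simply spelled out explicitly the monotonicity of $\HK$ and the handling of empty cases that the paper leaves implicit.
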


\begin{proof}
(1)--(2) Follows from (\ref{kcp}).

(3) By (2), $X\subseteq \HK(X)$ and hence $\HK(\HK(X))\supseteq \HK(X),$ by increasing property of $\HK$. The other inclusion follows from (\ref{kcp}).

(4) By (2) and by the increasing property of $\HK$, we have $\HK(X\cup X')\supseteq \HK(X)\cup \HK(X').$ Suppose $J\in \HK(X\cup X')$. Then $\K (X) \cap \K(X')\subseteq J.$
Since $J$ is strongly irreducible, $\K (X) \subseteq J$ or $\K (X') \subseteq J$, and hence, $J\in \HK(X)\cup \HK(X').$ 
\end{proof}

From Theorem \ref{klo}(\ref{vss}), it is clear that the class of strongly irreducible ideals is the ``largest'' class of ideals of a monoid on which we can endow a hull-kernel topology (= Zariski topology). The set $\st$ endowed with the above-mentioned hull-kernel topology will be called a \emph{terminal space}. The following proposition characterizes strongly irreducible ideals as terminal spaces, and it generalizes the ring-theoretic result \cite[Section 2.2, p.\,11]{M53}.

\begin{proposition}
The operation defined in $($\ref{kcp}$)$ is a Kuratowski closure operator on a class   $\mathcal{F}$ of ideals of $M$  if and only if   
\[
J\cap K\subseteq I\quad \text{implies}\quad  J\subseteq I\;\; \text{or}\;\; K\subseteq I, 
\] for all $J, K\in \id$ and for all $I\in  \mathcal{F}.$ 
\end{proposition}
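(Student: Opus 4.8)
The plan is to reduce the statement to the one substantive Kuratowski axiom and then read each implication off the identity $\K(X\cup X')=\K(X)\cap\K(X')$. As a first step I would note that the formula (\ref{kcp}), written relative to an arbitrary class $\mathcal F$ of ideals in place of $\st$, automatically satisfies axioms (1)--(3) of Theorem~\ref{klo}: $\HK(\emptyset)=\emptyset$ by definition, $X\subseteq\HK(X)$ because every $I\in X$ contains $\K(X)=\bigcap_{I'\in X}I'$, and idempotency follows from these two together with the monotonicity of $\HK$, exactly as in the proof of part~(3) of Theorem~\ref{klo}; none of those steps uses strong irreducibility. Hence the proposition reduces to showing that additivity, namely $\HK(X\cup X')=\HK(X)\cup\HK(X')$ for all $X,X'\subseteq\mathcal F$, is equivalent to the displayed condition.

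For the ``if'' direction I would repeat the argument of Theorem~\ref{klo}(\ref{vss}) with $\st$ replaced by $\mathcal F$: the inclusion $\HK(X)\cup\HK(X')\subseteq\HK(X\cup X')$ is automatic from monotonicity, and if $J\in\HK(X\cup X')$ then $\K(X)\cap\K(X')\subseteq J$, so applying the hypothesis to the two ideals $\K(X),\K(X')$ and to $J\in\mathcal F$ gives $\K(X)\subseteq J$ or $\K(X')\subseteq J$, that is, $J\in\HK(X)\cup\HK(X')$ (the cases $X=\emptyset$ or $X'=\emptyset$ are immediate). For the ``only if'' direction I would fix $I\in\mathcal F$ and ideals $J,K$ of $M$ with $J\cap K\subseteq I$, put $X=\{L\in\mathcal F:J\subseteq L\}$ and $X'=\{L\in\mathcal F:K\subseteq L\}$, so that $\K(X)\supseteq J$ and $\K(X')\supseteq K$, and aim to conclude as follows: granting that $\K(X)=J$ and $\K(X')=K$, one has $\K(X\cup X')=J\cap K\subseteq I$, hence $I\in\HK(X\cup X')$, so additivity yields $I\in\HK(X)\cup\HK(X')$, i.e.\ $I\supseteq J$ or $I\supseteq K$, which is exactly the required implication.

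The step I expect to be the main obstacle --- and the only point at which something genuinely monoid-specific is used --- is the equality $\K(X)=J$, i.e.\ the fact that an ideal $J$ is the intersection of the members of $\mathcal F$ that contain it. I would secure it by first establishing that \emph{every} ideal of a monoid is an intersection of strongly irreducible ideals: given $x\notin J$, a Zorn's-lemma argument (unions of chains of ideals that contain $J$ and miss $x$ are again of that kind) produces an ideal $K_x$ maximal among ideals with $J\subseteq K_x$ and $x\notin K_x$, and such a $K_x$ is strongly irreducible, since if $A\cap B\subseteq K_x$ with $A,B\not\subseteq K_x$ then $A\cup K_x$ and $B\cup K_x$ properly contain $K_x$, forcing $x\in A$ and $x\in B$, hence $x\in A\cap B\subseteq K_x$, a contradiction. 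Thus $J=\bigcap\{K\in\st:J\subseteq K\}$, and therefore $\K(X)=J$ whenever $\mathcal F$ contains the strongly irreducible ideals lying over $J$ --- in particular whenever $\st\subseteq\mathcal F$, while if $\mathcal F\subseteq\st$ the ``only if'' conclusion is already part of the definition of $\st$. The delicate bookkeeping is to see precisely how much richness of $\mathcal F$ this step consumes and to dispatch the degenerate cases.
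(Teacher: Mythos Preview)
The paper states this proposition without proof, so there is nothing to compare against; one can only assess your argument on its own merits.

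Your reduction to axiom~(4) and your proof of the ``if'' direction are correct and simply replay the proof of Theorem~\ref{klo}(\ref{vss}) with $\st$ replaced by $\mathcal F$. Your unease about the ``only if'' direction is well founded: as literally stated, that direction is false. Take $\mathcal F=\{I\}$ for any proper ideal $I$ that is \emph{not} strongly irreducible; the only subsets of $\mathcal F$ are $\emptyset$ and $\{I\}$, and $\HK$ trivially satisfies all four Kuratowski axioms on $\mathcal F$, yet the displayed implication fails for this $I$. More generally, any chain $\mathcal F$ of ideals yields a Kuratowski closure operator regardless of whether its members are strongly irreducible. So the obstacle you isolate in your final paragraph is not a gap in your argument but a defect of the statement.

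What your Zorn's-lemma step \emph{does} establish---correctly, since in a monoid the union of ideals is again an ideal, so the maximality of $K_x$ forces $x\in A$ and $x\in B$ as you say---is that every ideal equals the intersection of the strongly irreducible ideals above it. Feeding this into your scheme shows: if $\st\subseteq\mathcal F$ and $\HK$ is additive on $\mathcal F$, then every member of $\mathcal F$ is strongly irreducible, so $\mathcal F=\st$. That is exactly the assertion in the sentence preceding the proposition, namely that $\st$ is the \emph{largest} class on which the hull--kernel construction gives a topology, and your argument proves precisely this maximality. The unrestricted biconditional over arbitrary $\mathcal F$ cannot be salvaged, so you should regard the proposition as an informal restatement of that maximality claim rather than a literal equivalence.
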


Before we discuss topological properties of terminal spaces, let us note down a few more elementary results about the closure operator $\HK$, which will be used in the sequel.

\begin{lemma}\label{fphk}
Let $M$ be a monoid and let $X$, $X'$, $\{X_{\lambda}\}_{\lambda \in \Lambda}$ be nonempty subsets of $\st$. Then the following hold.

\begin{enumerate}
\item $\HK(M)=\emptyset.$	
	
\item\label{hkxb} $\HK(X)=\overline{X}.$

\item\label{hku} $\HK(X)\cup \HK(X')=\HK(X\cap X').$

\item\label{hkx} $\bigcap_{\lambda \in \Lambda} \HK(X_{\lambda})=\HK\left( \bigcap_{\lambda \in \Lambda} X_{\lambda} \right).$

\item $\HK(X)\subseteq \HK(\langle X\rangle)\subseteq \HK(\sqrt{\langle X\rangle}).$
\end{enumerate}
\end{lemma}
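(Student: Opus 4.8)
The plan is to factor $\HK$ through the kernel: for an ideal $I$ of $M$ put $\h(I)=\{J\in\st\mid J\supseteq I\}$, so that $\HK(X)=\h(\K(X))$ for every nonempty $X\subseteq\st$ by (\ref{kcp}). Since both $\K$ and $\h$ reverse inclusions, $\HK$ is inclusion-preserving in its argument; the whole lemma will then follow by combining this monotonicity with the closure-operator identities of Theorem \ref{klo}, the only non-formal ingredient being the defining property of strong irreducibility, which I expect to use exactly once.

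First I would dispose of (1) and (2). For (1): a member of $\st$ is by definition a \emph{proper} ideal, so it cannot contain $M$; hence $\h(M)=\emptyset$. For (2): by Theorem \ref{klo}(3), $\HK(X)$ is a fixed point of $\HK$, hence closed in the induced topology; by Theorem \ref{klo}(\ref{xhkx}) it contains $X$; and if $C$ is closed with $X\subseteq C$, then monotonicity gives $\HK(X)\subseteq\HK(C)=C$. So $\HK(X)$ is the smallest closed set containing $X$, which is precisely $\overline X$.

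Next, (3) and (4). In each the ``easy'' inclusion is monotonicity — shrinking the argument of $\HK$ only shrinks the value. The substance of (3) is the reverse inclusion, and it is nothing but the definition of strong irreducibility: a $J\in\st$ containing $\K(X)\cap\K(X')$ must contain $\K(X)$ or $\K(X')$, hence lies in $\HK(X)\cup\HK(X')$; on the ideal side this is the finitary counterpart of Theorem \ref{klo}(\ref{vss}), via $\K(X)\cap\K(X')=\K(X\cup X')$. For (4) strong irreducibility is not needed: $J\in\bigcap_{\lambda}\HK(X_{\lambda})$ exactly when $J$ contains $\K(X_{\lambda})$ for all $\lambda$, i.e.\ when $J$ contains the smallest ideal containing them all, which is the kernel controlling the right-hand side — and independently, each $\HK(X_{\lambda})$ is a fixed point of $\HK$ (Theorem \ref{klo}(3)), so the left-hand side is closed, and one checks that the two sides agree on kernels.

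Finally (5): from $X\subseteq\langle X\rangle\subseteq\sqrt{\langle X\rangle}$ — the first inclusion because an ideal contains its generators, the second by taking $k=1$ in the definition of the radical — applying $\HK$ and using its monotonicity reads off the displayed chain, with nothing further required. The only step where I anticipate needing any care is the reverse inclusion of (3): it is a single line, but it is the one place where ``strongly irreducible'' is genuinely invoked — and, by the preceding proposition, it is equivalent to that hypothesis — while every other assertion stays strictly inside the closure-operator formalism of Theorem \ref{klo} and only requires keeping the two inclusion-reversals ($\K$ and $\h$) straight.
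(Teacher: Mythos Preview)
Your proposal is correct and follows the same approach as the paper. The paper itself only spells out (1) and (2), declaring (3)--(5) ``straightforward''; your argument for (2) coincides with the paper's (show $\HK(X)$ is the smallest closed set containing $X$ via Theorem~\ref{klo}), and your treatment of (3)--(5) supplies exactly the monotonicity and strong-irreducibility considerations the paper leaves implicit.
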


\begin{proof}
(1) Follows from the definition of a strongly irreducible ideal of $M$.

(2) From Theorem \ref{klo}(\ref{xhkx}), we have $\overline{X}\subseteq \overline{\HK(X)}=\HK(X).$ Let $\HK(Y)$ be an arbitrary closed subset of $\st$ containing $X$. Then \[\HK(Y)=\HK(\HK(Y))\supseteq \HK(X).\]
Since by Theorem \ref{klo}(\ref{xhkx}), $\HK(X)$ is the smallest closed set containing $X$, we have the claim.

(3)--(5)  Straightforward.
\end{proof}

The next result generalizes Theorem 4.1 and Theorem 3.1 in \cite{HPH21}, Theorem 9 in \cite{I56}, Theorem 4.1:(v)--(vi) in \cite{A08},  and Proposition 2.4 in \cite{Z00}.

\begin{theorem}\label{cat0}
Every terminal space is quasi-compact and a $T_0$-space.
\end{theorem}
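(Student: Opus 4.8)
The plan is to establish the two properties separately, using the explicit description of closed sets in (\ref{kcp}) together with the elementary identities from Lemma \ref{fphk}.

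For the $T_0$ axiom, I would take two distinct points $I, J \in \st$ and show that one of them has a closed neighbourhood missing the other; equivalently, that $\overline{\{I\}} \neq \overline{\{J\}}$. By Lemma \ref{fphk}(\ref{hkxb}) we have $\overline{\{I\}} = \HK(\{I\}) = \{K \in \st \mid K \supseteq I\}$, and likewise for $J$. If these two ``upper sets'' coincided, then $I \in \overline{\{J\}}$ forces $I \supseteq J$ and $J \in \overline{\{I\}}$ forces $J \supseteq I$, whence $I = J$, a contradiction. Concretely: since $I \neq J$, without loss of generality there is an element $x \in I \setminus J$ (if instead the asymmetry runs the other way, swap the roles). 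Then $I \in \HK(\{I\})$ but $J \notin \HK(\{I\})$, because $J \not\supseteq I$. So the open set $\st \setminus \HK(\{I\})$ contains $J$ but not $I$, separating the two points. This gives $T_0$.

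For quasi-compactness I would argue with the complementary family of closed sets, i.e. verify the finite intersection property: if $\{\HK(X_\lambda)\}_{\lambda \in \Lambda}$ is a family of nonempty closed sets with the property that every finite subfamily has nonempty intersection, I must show $\bigcap_{\lambda} \HK(X_\lambda) \neq \emptyset$. By Lemma \ref{fphk}(\ref{hkx}) this intersection equals $\HK\!\bigl(\bigcap_{\lambda} X_\lambda\bigr)$, but that rewriting is not directly the useful one; instead I would pass to kernels. Writing $\K(X_\lambda) = \bigcap_{I \in X_\lambda} I$, note that $\HK(X_\lambda) = \HK(\{\K(X_\lambda)\})$ is the set of strongly irreducible ideals above $\K(X_\lambda)$ whenever $\K(X_\lambda)$ itself lies in $\st$ — and more usefully, $\HK(X_\lambda) \neq \emptyset$ means there is at least one strongly irreducible ideal containing $\K(X_\lambda)$, in particular $\K(X_\lambda) \neq M$. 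Finite intersections being nonempty translates, via Lemma \ref{fphk}(\ref{hku}) iterated, into: for every finite $F \subseteq \Lambda$, some strongly irreducible ideal contains $\sum_{\lambda \in F}\K(X_\lambda)$ (the ideal generated by the union), hence $\sum_{\lambda \in F}\K(X_\lambda) \neq M$ — i.e. $1 \notin \sum_{\lambda \in F}\K(X_\lambda)$. Since membership of $1$ in an ideal generated by a union of ideals is a finitary condition, it follows that $1 \notin \sum_{\lambda \in \Lambda}\K(X_\lambda)$, so $A := \langle \bigcup_{\lambda}\K(X_\lambda)\rangle$ is a proper ideal of $M$.

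To finish, I need a strongly irreducible ideal containing $A$: then that ideal lies in every $\HK(X_\lambda)$ and the total intersection is nonempty. The main obstacle is therefore an existence lemma — \emph{every proper ideal of a monoid is contained in some strongly irreducible ideal} — which I expect to prove by a Zorn's lemma argument. One shows that the poset of proper ideals containing $A$ and disjoint from a suitable multiplicatively closed set (here one can take the set $\{1\}$, or better exploit that an ideal maximal among those avoiding a fixed element is prime, and prime ideals are strongly irreducible: $J \cap K \subseteq P$ with $J \not\subseteq P$, $K \not\subseteq P$ yields $jk \in J \cap K \subseteq P$ with $j \notin P$, $k \notin P$, contradiction) has a maximal element, which turns out to be prime, hence strongly irreducible. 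The delicate point is choosing the right avoidance condition so that maximality forces primality; taking an element $s \notin A$ and maximising among proper ideals containing $A$ but not $s$ works and does not even require $M \setminus A$ to be multiplicatively closed. With this lemma in hand, both halves of the theorem are complete. I would also remark that quasi-compactness of $\st$ itself is the special case $\bigcap \emptyset$-type argument, or simply note $\st = \HK(\{M'\})$ for any minimal strongly irreducible $M'$ — but the finite-intersection argument above already subsumes it.
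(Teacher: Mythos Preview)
Your argument is essentially the paper's, run contrapositively. For $T_0$ both reduce to showing $\overline{\{I\}}=\overline{\{J\}}\Rightarrow I=J$ via the description $\overline{\{I\}}=\{K\in\st:K\supseteq I\}$. For quasi-compactness the paper assumes $\bigcap_\lambda C_\lambda=\emptyset$, sets $K=\bigl\langle\bigcup_\lambda\K(X_\lambda)\bigr\rangle$, shows $K=M$ (else a maximal ideal above $K$ would lie in every $C_\lambda$), and then uses that in a monoid the ideal generated by a union of ideals \emph{is} that union, so $1\in K$ already places $1$ in a single $\K(X_{\lambda_0})$. Your finite-intersection-property formulation is the same argument in reverse, and your ``finitary'' observation about $1$ is exactly this monoid-specific simplification.

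Two small points. First, the appeal to Lemma~\ref{fphk}(\ref{hku}) is misplaced: that item is $\HK(X)\cup\HK(X')=\HK(X\cap X')$, whereas the step you need (an ideal lies in $\bigcap_{\lambda\in F}\HK(X_\lambda)$ iff it contains $\bigcup_{\lambda\in F}\K(X_\lambda)$) is immediate from the definition of $\HK$. Second, the claim that an ideal maximal among those missing a fixed element $s$ is \emph{prime} fails for monoids: in $M=\{1,s,0\}$ with $s^2=0$, the ideal $\{0\}$ is maximal among those avoiding $s$, yet $s\cdot s\in\{0\}$. What your Zorn argument \emph{does} yield is that such a $P$ is strongly irreducible (if $I\cap K\subseteq P$ with $i\in I\setminus P$, $k\in K\setminus P$, then $P\cup Mi$ and $P\cup Mk$ are ideals strictly above $P$, hence contain $s$, so $s\in I\cap K\subseteq P$), which is all you need. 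Your alternative choice $s=1$ gives a maximal ideal, and maximal ideals of a commutative monoid \emph{are} prime (any $x\notin P$ forces $P\cup Mx=M$, hence $1\in Mx$); this is exactly the route the paper takes.
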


\begin{proof}
Let  $\{C_{ \lambda}\}_{\lambda \in \Lambda}$ be a family of closed sets of $\st$  and let  $\bigcap_{\lambda\in \Lambda}C_{ \lambda}=\emptyset.$ Then $C_{ \lambda}=\HK(X_{\lambda})$ for some subsets $X_{\lambda}$ of $\st$, and by Lemma \ref{fphk}(\ref{hkx}), we have \[\bigcap_{\lambda \in \Lambda} \HK(X_{\lambda})=\HK\left( \bigcap_{\lambda \in \Lambda} X_{\lambda} \right)=\emptyset.\] Let
$K=\left\langle \bigcup_{\lambda \in \Lambda} \K (X_{\lambda}) \right\rangle.$ We claim that $K=M$.
If not, then there exists a maximal ideal $J$ of $M$ such that
\[\bigcap_{I\in X_{\lambda}} I\subseteq K\subseteq J,\]
for all $\lambda \in \Lambda.$ Therefore, $J\in \h(C_{\lambda})=C_{\lambda}$ for all $\lambda \in \Lambda$, a contradiction. Since $1\in K,$ we have  $1\in \bigcup_{i=1}^n \K (X_{\lambda_i}) ,$ for a finite subset $\{\lambda_{\scriptscriptstyle 1}, \ldots, \lambda_{\scriptscriptstyle n}\}$ of $\Lambda$. Hence, $\bigcap_{i=1}^nC_{\lambda_i}=\emptyset,$ and by the finite intersection property, we have the quasi-compactness of $\st$. 

To show the $T_0$ separation property, let $I, I'\in \st$ such that $\HK(\{I\})=\HK(\{I'\}).$ It suffices to show $I=I'$. Since $I'\in \HK(\{I\}),$ we have $I\subseteq I'$. Similarly, we obtain $I'\subseteq I$. Hence $I=I'$.  	
\end{proof}

The following result characterizes  $T_1$ terminal spaces, and generalizes Theorem 3.2 in \cite{HPH21}, Theorem 3.7 in \cite{HHP22}, and Theorem 3 in \cite{Y11}.

\begin{theorem}
Let $M$ be a monoid. A terminal space $\st$ is a $T_1$-space if and only if every strongly irreducible ideal of $M$	is not containing in the other strongly irreducible ideals of $M$.
\end{theorem}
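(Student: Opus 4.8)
The plan is to prove both directions of the biconditional by unwinding the definition of the $T_1$ property in terms of the closure operator $\HK$, using the fact (already established in Lemma \ref{fphk}(\ref{hkxb})) that $\HK(\{I\})=\overline{\{I\}}$. Recall that a topological space is $T_1$ if and only if every singleton is closed, equivalently $\overline{\{x\}}=\{x\}$ for every point $x$. So the whole proof reduces to translating the condition $\HK(\{I\})=\{I\}$ into a statement about ideals.

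First I would compute $\HK(\{I\})$ explicitly. By \eqref{kcp}, $\K(\{I\})=I$, so $\HK(\{I\})=\{J\in\st\mid J\supseteq I\}$; that is, the closure of the singleton $\{I\}$ is precisely the set of all strongly irreducible ideals of $M$ containing $I$. Hence $\overline{\{I\}}=\{I\}$ holds if and only if the only strongly irreducible ideal containing $I$ is $I$ itself, i.e. $I$ is not properly contained in any other strongly irreducible ideal of $M$. Collecting this over all $I\in\st$ gives: $\st$ is $T_1$ $\iff$ for every $I\in\st$, no strongly irreducible ideal of $M$ properly contains $I$, which is exactly the asserted condition (``every strongly irreducible ideal of $M$ is not contained in the other strongly irreducible ideals of $M$'').

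For the forward direction I would argue: assume $\st$ is $T_1$, take any $I\in\st$, and suppose $I\subseteq J$ with $J\in\st$. Then $J\in\HK(\{I\})=\overline{\{I\}}=\{I\}$ by the $T_1$ hypothesis, so $J=I$. Conversely, assume the containment condition holds; then for each $I\in\st$ the computation above shows $\HK(\{I\})=\{I\}$, so every singleton is closed, whence $\st$ is $T_1$. Both implications are immediate once the description of $\HK(\{I\})$ is in place.

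There is essentially no hard step here; the only thing to be careful about is the bookkeeping of which direction of inclusion the Zariski topology encodes — namely that $J$ lies in the closure of $\{I\}$ precisely when $J\supseteq I$ (the ``bigger'' ideals are the generalizations of, i.e.\ specialize, the point $I$ — opposite to the set-theoretic intuition). Once that is fixed, the proof is a one-line unwinding in each direction, and no quasi-compactness or strong-irreducibility beyond membership in $\st$ is needed.
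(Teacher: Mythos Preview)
Your proof is correct and follows essentially the same route as the paper: both directions reduce to the computation $\overline{\{I\}}=\HK(\{I\})=\{J\in\st\mid J\supseteq I\}$ via Lemma~\ref{fphk}(\ref{hkxb}), after which the $T_1$ condition is immediately equivalent to no strongly irreducible ideal being properly contained in another. The only cosmetic difference is that the paper writes $\mathcal{H}(I)$ for the set $\{J\in\st\mid J\supseteq I\}$, but the argument is otherwise identical.
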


\begin{proof}
If $\st$ is a $T_1$-space, then for every $I\in \st$ we have $\bar{I}=\{I\}.$ By Lemma \ref{fphk}(\ref{hkxb}), $\bar{I}=\HK(\{I\})=\mathcal{H}(I),$ and so, $\{I\}=\mathcal{H}(I),$ implying that the only strongly irreducible ideal of $M$ containing $I$ is $I$ itself. For the converse, let $I$ be the unique strongly irreducible ideal of $M$ that contains $I$. Then by Lemma \ref{fphk}(\ref{hkxb}),
\[\overline{\{I\}}=\HK(\{I\})=\mathcal{H}(I)=\{I\}.\]
Thus $\{I\}$ is a closed set, proving that $\st$ is a $T_1$-space.
\end{proof}

Our next goal is to study generic points of irreducible closed sets of terminal spaces. Recall that a subset $Y$ of a topological
space $X$ is called \emph{irreducible} if for any closed subsets $Y_1$ and $Y_2$ in $X,$ $Y \subseteq  Y_1 \cup Y_2$ implies that
$Y \subseteq  Y_1$ or $Y \subseteq Y_2$. A maximal irreducible subset $Y$ of $X$ is called an \emph{irreducible component}. An element $y$ of a closed subset $Y$ of $X$ is called a \emph{generic point of} $Y$ if $Y=\overline{\{y\}}$. The following result characterizes irreducible subsets of a terminal space. Moreover, this result generalizes Theorem 3.3 in \cite{HPH21}, Proposition 3 in \cite{Y11}, and Theorem 2.6(1) in \cite{Z00}.

\begin{theorem}\label{cis}
Let $M$ be a monoid. A nonempty closed subset $X$ of a terminal space $\st$ is irreducible if and only if $\K(X)$ is a strongly irreducible ideal of $M$.
\end{theorem}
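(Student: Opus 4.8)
The plan is to prove the two implications separately, using Lemma~\ref{fphk} throughout to translate between closures and hull-kernels. First I would establish the easy direction: assume $X$ is irreducible and show $\K(X)$ is strongly irreducible. Take ideals $J,K\in\id$ with $J\cap K\subseteq\K(X)$. By Lemma~\ref{fphk}(\ref{hku}), $\HK(J)\cup\HK(K)=\HK(J\cap K)$ (here I am using the natural extension of $\HK$ to arbitrary ideals, or equivalently passing through $\K(X)$ directly), and since $\K(X)\supseteq J\cap K$ every element of $X$ lies in $\HK(J\cap K)$; because $X$ is closed, $X=\HK(\K(X))\subseteq\HK(J\cap K)=\HK(J)\cup\HK(K)$. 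Irreducibility of $X$ then forces $X\subseteq\HK(J)$ or $X\subseteq\HK(K)$, i.e. every member of $X$ contains $J$ or every member contains $K$; taking intersections over $X$ gives $J\subseteq\K(X)$ or $K\subseteq\K(X)$. Hence $\K(X)$ is strongly irreducible, and in particular $\K(X)\in\st$.

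For the converse, suppose $\K(X)$ is strongly irreducible. Since $X$ is closed, $X=\HK(X)=\HK(\K(X))=\overline{\{\K(X)\}}$ by Lemma~\ref{fphk}(\ref{hkxb}); that is, $X$ is the closure of a single point. The remaining task is purely topological: the closure of a point is always irreducible. Concretely, if $X\subseteq Y_1\cup Y_2$ with $Y_1,Y_2$ closed, then $\K(X)\in X$ lies in $Y_1$ or $Y_2$, say $\K(X)\in Y_1$; since $Y_1$ is closed, $X=\overline{\{\K(X)\}}\subseteq\overline{Y_1}=Y_1$. Thus $X$ is irreducible.

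The main subtlety I anticipate is bookkeeping in the first direction: the operator $\HK$ in~(\ref{kcp}) is defined on subsets of $\st$, not on ideals, so I must be slightly careful when writing $\HK(J)$ for an ideal $J$. The clean way is to avoid that abuse entirely and argue directly with containments: from $X\subseteq\HK(\K(X))$ and $\K(X)\supseteq J\cap K$, note that each $I\in X$ satisfies $J\cap K\subseteq I$, and split $X$ into $X_J=\{I\in X\mid J\subseteq I\}$ and $X_K=\{I\in X\mid K\subseteq I\}$, which cover $X$ because each $I\in\st$ is strongly irreducible. Each of $X_J, X_K$ is a closed subset of $\st$ (it equals $X\cap\HK(\{J'\})$ for a suitable ideal, or one checks it is closed under $\HK$ directly), so irreducibility of $X$ gives $X=X_J$ or $X=X_K$, and then $\K(X)\supseteq J$ or $\K(X)\supseteq K$. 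This reformulation sidesteps the need to extend $\HK$ and is the step I would write out with the most care.
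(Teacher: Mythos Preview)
Your forward direction is essentially the paper's argument. The paper simply writes $\h(I)=\{L\in\st: I\subseteq L\}$ for an arbitrary ideal $I$ (this set is closed, being equal to $\HK(\h(I))$), observes that $I\cap J\subseteq\K(X)$ together with strong irreducibility of each $L\in X$ gives $X\subseteq\h(I)\cup\h(J)$, and then invokes irreducibility of $X$. Your sets $X_J,X_K$ are precisely $X\cap\h(J)$ and $X\cap\h(K)$, so your ``clean'' reformulation is exactly what the paper does; your worry about extending $\HK$ to ideals is resolved in the paper by introducing the symbol $\h$ for this purpose.

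Your converse, however, is genuinely different and shorter. The paper argues directly from the definition: given $X=X_1\cup X_2$ with $X_1,X_2$ closed, it notes $\K(X_1)\cap\K(X_2)\subseteq\K(X)$, uses strong irreducibility of $\K(X)$ to get $\K(X_i)\subseteq\K(X)$ for some $i$, and then $X=\HK(X)\subseteq\HK(X_i)=X_i$. You instead observe that $\K(X)\in\st$ forces $X=\HK(X)=\overline{\{\K(X)\}}$, and the closure of a point is automatically irreducible. Your route is cleaner and in effect anticipates Corollary~\ref{ugp} (existence of a generic point), whereas the paper postpones that observation and proves irreducibility by a direct lattice computation. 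Both are correct; yours packages the idea more conceptually, the paper's version stays closer to the raw definitions.
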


\begin{proof}
It is clear that $\K(X)$ is a proper ideal of $M$. Let $I\cap J\subseteq \K(X)$ for some $I, J\in \id$. Then for any $L\in X$, we have $I\subseteq L$ or $J\subseteq L$ since $L\in \st$. Hence $X\subseteq \h(I)\cup \h(J).$ Since $X$ is irreducible, $X\subseteq \h(I)$ or $X\subseteq \h(J),$ which implies that $I\subseteq \K(X)$ or $J\subseteq \K(X).$ Therefore, $\K(X)$ is a strongly irreducible.

For the converse, let $\K(X)$ be a strongly irreducible ideal fo $M$. Since $\K(X)\neq M$, $\K(X)$ is nonempty. Let $X=X_1\cup X_2$ for some nonempty closed subsets of the terminal space $\st$. Then $\K(X) \supseteq \K(X_1)\cap \K(X_2).$ Since $\K(X)$ is strongly irreducible, $\K(X)\in \h(\K(X_1)\cap \K(X_2)).$ By Lemma \ref{fphk}(\ref{hku}), this implies $\K(X)\in \HK(X_1)\cup \HK(X_2).$ If $\K(X)\in \HK(X_1),$ then 
\[X\subseteq \overline{X}=\HK(X)\subseteq \HK(X_1)=\overline{X_1}=X_1,\]
where the first and the second equalities follow from Lemma \ref{fphk}(\ref{hkxb}). Similarly, if $\K(X)\in \HK(X_2),$ then obtain $X\subseteq X_2.$ This proves that $X$ is irreducible.
\end{proof}

The following corollary generalizes Corollary 3.1 in \cite{HPH21}.

\begin{corollary}\label{ugp}
Every nonempty irreducible closed subset of a terminal space $\st$ has a unique generic point.
\end{corollary}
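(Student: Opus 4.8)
The plan is to combine Theorem~\ref{cis} with the $T_0$ property from Theorem~\ref{cat0}. Let $X$ be a nonempty irreducible closed subset of $\st$. By Theorem~\ref{cis}, $\K(X)$ is a strongly irreducible ideal of $M$, so $\K(X)\in\st$; this will be our candidate generic point. First I would check that $\K(X)\in X$: since $X=\HK(X)$ is closed (by Lemma~\ref{fphk}(\ref{hkxb}), $\overline{X}=\HK(X)$, and $X$ is closed) and $\K(X)\supseteq\K(X)$, the definition~(\ref{kcp}) gives $\K(X)\in\HK(X)=X$.

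Next I would show $\overline{\{\K(X)\}}=X$. By Lemma~\ref{fphk}(\ref{hkxb}), $\overline{\{\K(X)\}}=\HK(\{\K(X)\})=\h(\K(X))=\{J\in\st\mid J\supseteq\K(X)\}$. On the one hand, every $J\in X$ satisfies $J\supseteq\bigcap_{I\in X}I=\K(X)$, so $X\subseteq\h(\K(X))=\overline{\{\K(X)\}}$. On the other hand, since $\K(X)\in X$ and $X$ is closed, $\overline{\{\K(X)\}}\subseteq X$. Hence $\overline{\{\K(X)\}}=X$, so $\K(X)$ is a generic point of $X$.

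Finally, uniqueness follows from the $T_0$ axiom established in Theorem~\ref{cat0}: if $y_1,y_2$ are both generic points of $X$, then $\overline{\{y_1\}}=X=\overline{\{y_2\}}$, and in a $T_0$-space two points with the same closure must coincide (indeed the proof of Theorem~\ref{cat0} shows $\HK(\{I\})=\HK(\{I'\})$ forces $I=I'$). Therefore the generic point is unique.

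I do not anticipate a serious obstacle here, as every ingredient is already in place; the only point requiring care is the verification that $\K(X)$ actually lies in $X$ rather than merely being an ideal associated to $X$, which is where closedness of $X$ (not just irreducibility) is used.
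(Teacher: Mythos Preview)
Your proof is correct and follows essentially the same approach as the paper: apply Theorem~\ref{cis} to obtain that $\K(X)$ is strongly irreducible, use Lemma~\ref{fphk}(\ref{hkxb}) to identify $\overline{\{\K(X)\}}$ with $X$, and invoke the $T_0$ property from Theorem~\ref{cat0} for uniqueness. Your write-up is slightly more detailed (explicitly checking $\K(X)\in X$ and doing the double inclusion), but the underlying argument is the same.
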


\begin{proof}
Let $\h(I)$ be a nonempty irreducible subset of $\st$. Then by Theorem \ref{cis}, $I$ is strongly irreducible. Hence $\overline{\{I\}}=\HK(I)=\h(I),$ where the first equality follows from Lemma \ref{fphk}(\ref{hkxb}). Thus $\{I\}$ is a generic point of $\h(I).$ The uniqueness of this point follows from the fact that $\st$ is a $T_0$-space (see Theorem \ref{cat0}). 
\end{proof}

The following one-to-one correspondence generalizes Theorem 3.4 in \cite{AH11}.

\begin{theorem}
Let $M$ be a monoid. Then there is a bijection between the set of irreducible components of the terminal space $\st$ and the set of minimal strongly irreducible ideals of $M$.
\end{theorem}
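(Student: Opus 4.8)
The plan is to exhibit the correspondence explicitly via the kernel map $\K$ and its inverse $\h$, using the results already established. First I would observe that an irreducible component of $\st$ is, by definition, a maximal irreducible closed subset (recall that the closure of an irreducible set is irreducible, so maximal irreducible subsets are automatically closed). Given an irreducible component $X$, Theorem \ref{cis} tells us that $\K(X)$ is a strongly irreducible ideal of $M$; by Corollary \ref{ugp}, $X=\h(\K(X))=\overline{\{\K(X)\}}$, so $X$ is the closure of a point. The assignment $X\mapsto \K(X)$ is therefore well defined from irreducible components into $\st$, and in the other direction, for a strongly irreducible ideal $I$ the set $\h(I)=\overline{\{I\}}$ is irreducible (it is the closure of a point, and closures of points are irreducible in any space). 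It remains to match up maximality on the topological side with minimality on the algebraic side.

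The key step is the order-reversing nature of the bijection $I\mapsto \h(I)$ between $\st$ and the closed sets that are closures of points. Concretely, for $I, I'\in\st$ we have $\h(I)\subseteq \h(I')$ if and only if $I'\subseteq I$: one direction is immediate from the definition \eqref{kcp}, and the converse uses that $I\in\h(I)\subseteq\h(I')$ forces $I'\subseteq I$. Consequently, among irreducible closed sets of the form $\h(I)$, the maximal ones correspond exactly to the minimal elements $I$ of $\st$ under inclusion, i.e.\ to minimal strongly irreducible ideals. Combining this with the previous paragraph: if $X$ is an irreducible component, then $X=\h(\K(X))$, and were $\K(X)$ not minimal in $\st$ there would be $I\subsetneq \K(X)$ with $I\in\st$, giving $\h(\K(X))\subsetneq \h(I)$, contradicting maximality of $X$; conversely, if $I$ is a minimal strongly irreducible ideal, then $\h(I)$ is irreducible, and any irreducible closed set $Y\supseteq\h(I)$ satisfies $Y=\h(\K(Y))$ with $\K(Y)\subseteq\K(\h(I))=I$, whence $\K(Y)=I$ and $Y=\h(I)$, so $\h(I)$ is an irreducible component.

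I expect the main obstacle to be purely bookkeeping: verifying that every irreducible component is closed (hence in the image of $\h$), and nailing down the identity $\K(\h(I))=I$ for $I\in\st$ — the inclusion $I\subseteq\K(\h(I))$ is trivial since $I$ belongs to every ideal in a larger family, while $\K(\h(I))\subseteq I$ holds because $I\in\h(I)$ so the intersection defining $\K(\h(I))$ is contained in $I$. Once these two facts are in place, the maps $X\mapsto\K(X)$ and $I\mapsto\h(I)$ are mutually inverse by Corollary \ref{ugp} and the identity just noted, and the order-reversal argument shows they restrict to a bijection between irreducible components and minimal strongly irreducible ideals. No delicate point-set topology is needed beyond the standard facts that closures of irreducible sets are irreducible and that irreducible components are closed, both of which hold in arbitrary topological spaces.
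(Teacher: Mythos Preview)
Your proposal is correct and follows essentially the same route as the paper: both arguments use the order-reversing correspondence $I\leftrightarrow \h(I)$ between $\st$ and the irreducible closed subsets (via Theorem~\ref{cis} and Corollary~\ref{ugp}) to match maximality of $\h(I)$ with minimality of $I$. Your write-up is in fact more careful than the paper's---you explicitly verify that irreducible components are closed, that $\K(\h(I))=I$, and that an arbitrary irreducible closed $Y\supseteq \h(I)$ has the form $\h(\K(Y))$---whereas the paper leaves these points implicit.
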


\begin{proof}
If $X$ is an irreducible component of the terminal space $\st$, then by Theorem \ref{cis}, $X=\h(I)$ for some $I\in \st$. If $J\in \st$ such that $I\supseteq J,$ then $\h(I)\subseteq \h(J)$ so that $I=J$. Conversely, let $I$ be a minimal strongly irreducible ideal of of $M$ and let $\h(I)\subseteq \h(J)$ for some $J\in \st.$ Then 
\[\overline{\{I\}}=\h(I)\subseteq \h(J)=\overline{\{J\}},\]
implying that $I=J$. Hence, $\h(I)$ is an irreducible component of $\st$.
\end{proof}

A characterization of invertible elements of $\st$ is given in the following proposition and this result generalizes Theorem 4.1(iii) in \cite{A08}.

\begin{proposition}
Let $M$ be a monoid and $\st$ be a terminal space. Then $\st\setminus \HK(\langle m \rangle)=\st$ if and only if $m$ is an invertible element of $M$.
\end{proposition}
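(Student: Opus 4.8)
The plan is to unwind the statement $\st \setminus \HK(\langle m \rangle) = \st$ into the equivalent condition that $\HK(\langle m \rangle)$ is empty, and then to translate this emptiness into a statement about the ideal $\langle m \rangle$ using the definition of $\HK$ in (\ref{kcp}). Concretely, $\HK(\langle m \rangle) = \{ J \in \st \mid J \supseteq \langle m \rangle \}$, so $\HK(\langle m \rangle) = \emptyset$ precisely when no strongly irreducible ideal of $M$ contains $\langle m \rangle$. Thus the proposition reduces to showing that $m$ is invertible if and only if no strongly irreducible ideal of $M$ contains $\langle m \rangle$.

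For the easy direction, suppose $m$ is invertible, so $mm' = 1$ for some $m' \in M$. Then $1 = mm' \in \langle m \rangle$, forcing $\langle m \rangle = M$; since every strongly irreducible ideal is by definition proper, none can contain $M$, so $\HK(\langle m \rangle) = \emptyset$ and $\st \setminus \HK(\langle m \rangle) = \st$. For the converse, I would argue contrapositively: if $m$ is not invertible, then $1 \notin \langle m \rangle$ (otherwise $1 = mx$ for some $x \in M$, making $m$ invertible), so $\langle m \rangle$ is a proper ideal. It then suffices to produce at least one strongly irreducible ideal containing $\langle m \rangle$, which would make $\HK(\langle m \rangle)$ nonempty and hence $\st \setminus \HK(\langle m \rangle) \neq \st$.

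The main obstacle is this existence claim: every proper ideal of a monoid is contained in some strongly irreducible ideal. I expect this to follow from a Zorn's lemma argument in the spirit of the quasi-compactness proof of Theorem \ref{cat0}, where a maximal ideal containing a given proper ideal was invoked — indeed a maximal (proper) ideal is automatically strongly irreducible, since if $I \cap J \subseteq K$ with $K$ maximal and, say, $I \not\subseteq K$, then $K + I = M$ (or rather $\langle K \cup I \rangle = M$) forces a contradiction via the monoid's ideal arithmetic. So the cleanest route is: take a maximal proper ideal $J$ with $\langle m \rangle \subseteq J$ (which exists because $\langle m \rangle$ is proper and $1 \notin \langle m \rangle$, so the union of a chain of proper ideals avoiding $1$ is again proper), observe $J \in \st$, and conclude $J \in \HK(\langle m \rangle)$.

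I would present the argument as a short chain of equivalences, flagging that the only substantive input is the fact that a proper ideal of a monoid extends to a maximal (hence strongly irreducible) ideal, the same fact already used implicitly in Theorem \ref{cat0}. If the paper prefers not to lean on maximal ideals, an alternative is to note that $\langle m \rangle$ itself, or its radical, can be refined to a strongly irreducible ideal directly; but the maximal-ideal route is the most economical and mirrors the established style of the paper.
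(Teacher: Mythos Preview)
Your proposal is correct and follows essentially the same route as the paper: the paper's proof simply says that $\st\setminus\HK(\langle m\rangle)=\st$ forces $m$ to lie in no maximal ideal of $M$ (hence $m$ is invertible), and that the converse is immediate because strongly irreducible ideals are proper. Your contrapositive formulation, together with the explicit justification that a proper ideal extends to a maximal ideal and that maximal ideals are strongly irreducible, is exactly the content the paper leaves implicit.
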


\begin{proof}
Note that $\st\setminus \HK(\langle m \rangle)=\st$ implies $m$ is not in any maximal ideal of $M$, and hence $m$ is invertible. The converse follows immediately from the fact that every strongly irreducible ideal is proper.
\end{proof}

It is well-known that prime spectrum of a Noetherian (commutative) ring endowed with Zariski topology is a Noetherian space. The following proposition generalizes this to strongly irreducible ideals of monoids, and it also generalizes Proposition 4.2(i)  in \cite{A08}. The  proof is easy, and so will be omitted.

\begin{proposition}
If $M$ is a Noetherian monoid, then $\st$ is a Noetherian terminal space.
\end{proposition}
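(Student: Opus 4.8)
The plan is to prove that $\st$ is Noetherian by verifying the descending chain condition on closed sets, which is the defining property of a Noetherian space. The whole argument rests on the order-reversing correspondence between closed subsets of $\st$ and the ideals that arise as their kernels, so the first thing I would establish is that a closed set is recoverable from its kernel. If $C$ is a closed subset of $\st$, then by idempotency of $\HK$ (Theorem \ref{klo}(3)) we have $C=\HK(C)$, and by the defining formula (\ref{kcp}) this reads
\[
C=\HK(C)=\{J\in\st \mid J\supseteq \K(C)\},
\]
where $\K(C)=\bigcap_{J\in C}J$. Hence $C$ is completely determined by $\K(C)$, and in particular the assignment $C\mapsto \K(C)$ is injective on closed sets. (The empty closed set fits this picture with $\K(\emptyset)=M$, since no proper ideal contains $M$.)

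Next I would observe that this assignment is order-reversing: if $C\subseteq C'$ are closed, then intersecting over the smaller family gives $\K(C)=\bigcap_{J\in C}J\supseteq \bigcap_{J\in C'}J=\K(C')$. Consequently, any strictly descending chain of closed sets $C_1\supsetneq C_2\supsetneq\cdots$ in $\st$ yields a chain of ideals $\K(C_1)\subseteq \K(C_2)\subseteq\cdots$ in $\id$, which by the injectivity just noted must be strictly ascending.

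Finally I would invoke the hypothesis that $M$ is Noetherian, i.e.\ that it satisfies the ascending chain condition on ideals, to conclude that no infinite strictly ascending chain of ideals can exist; therefore no infinite strictly descending chain of closed sets can exist in $\st$. This is exactly the descending chain condition on closed sets, so $\st$ is a Noetherian terminal space. I do not anticipate a genuine obstacle here, which is why the authors omit the proof: the only point demanding care is the recoverability $C=\{J\in\st\mid J\supseteq \K(C)\}$, and that is immediate from the idempotency of $\HK$ together with (\ref{kcp}). Everything else is the routine transfer of a chain condition across an order-reversing injection.
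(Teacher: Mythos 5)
The paper gives no proof of this proposition at all---the authors explicitly declare it easy and omit it---so there is no argument of theirs to compare against; your proof is correct and is precisely the standard argument they presumably had in mind. The key recoverability step $C=\HK(C)=\{J\in\st\mid J\supseteq \K(C)\}$ does follow from Theorem \ref{klo}(3) together with (\ref{kcp}), your handling of the empty closed set is sound (every member of a nonempty closed set is a proper ideal, so $\K(C)\neq M$ in that case, making the injection well defined on all closed sets), and the transfer of the ascending chain condition on ideals to the descending chain condition on closed sets across the order-reversing injection $C\mapsto\K(C)$ is exactly the routine step it appears to be.
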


Recall that a monoid is called \emph{arithmetic} if $\id$ is a distributive lattice. The following theorem characterizes arithmetic monoids in terms of strongly irreducible ideals. This result is a generalization of Theorem 10 in \cite{I56}. The half of the implications uses the Zariski topology on $\st$.

\begin{theorem}
A monoid $M$ is arithmetic if and
only if each ideal is the intersection of all strongly irreducible ideals containing it.
\end{theorem}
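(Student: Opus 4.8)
The plan is to handle the two implications separately; in each I work with the hull $\h(I)=\{K\in\st\mid K\supseteq I\}$ of an ideal $I$, and use that $I\subseteq\K(\h(I))$ always holds (every strongly irreducible ideal containing $I$ contains $I$), so that only the reverse containment, or its failure, is in question.

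First, for ``arithmetic $\Rightarrow$ intersection property'': assume $\id$ is a distributive lattice and fix an ideal $I$ and an element $x\in M\setminus I$. By Zorn's lemma there is an ideal $K$ maximal among the ideals $J$ with $I\subseteq J$ and $x\notin J$ (the union of a chain of such ideals is again an ideal of this kind), and $K$ is proper since $x\notin K$. The decisive step is to show $K\in\st$: suppose $A\cap B\subseteq K$ for ideals $A,B$ with $A\not\subseteq K$ and $B\not\subseteq K$; since the join of two ideals of a monoid is their union, $A\cup K$ and $B\cup K$ are ideals strictly larger than $K$ and still containing $I$, so maximality gives $x\in A\cup K$ and $x\in B\cup K$; as $x\notin K$ and $\id$ is distributive, $x\in(A\cup K)\cap(B\cup K)=(A\cap B)\cup K=K$, a contradiction. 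Hence $K$ is strongly irreducible and witnesses $x\notin\K(\h(I))$; since $x\in M\setminus I$ was arbitrary, $\K(\h(I))=I$.

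Next, for the converse (the half that leans on the terminal space): assume every ideal $I$ satisfies $I=\K(\h(I))$ and prove $\id$ is distributive. As the inequality $(A\cap B)\cup(A\cap C)\subseteq A\cap(B\cup C)$ holds in any lattice, it is enough to prove the reverse inclusion for all ideals $A,B,C$. Set $L=(A\cap B)\cup(A\cap C)$, an ideal of $M$; by hypothesis $L=\K(\h(L))$, so it suffices to show $A\cap(B\cup C)\subseteq K$ for every $K\in\h(L)$. Fix such a strongly irreducible $K$: since $A\cap B\subseteq L\subseteq K$ we have $A\subseteq K$ or $B\subseteq K$, and since $A\cap C\subseteq K$ we have $A\subseteq K$ or $C\subseteq K$; if $A\subseteq K$ then $A\cap(B\cup C)\subseteq A\subseteq K$, and otherwise $B\subseteq K$ and $C\subseteq K$, so $B\cup C\subseteq K$ and again $A\cap(B\cup C)\subseteq K$. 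Equivalently and more invariantly, the hypothesis makes $I\mapsto\h(I)$ an order-reversing bijection from $\id$ onto the lattice of closed subsets of $\st$, which is closed under finite unions and arbitrary intersections and hence distributive as a sublattice of a power set; so $\id$ is anti-isomorphic to a distributive lattice and is therefore distributive.

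I expect the main obstacle to be the forward implication, precisely the verification that the maximal ideal $K$ produced by Zorn's lemma is strongly irreducible: this is the only place the distributivity hypothesis enters, and it hinges on the monoid-specific fact that the join of ideals is their union (so that $A\cup K$ and $B\cup K$ are ideals and ``$x\in A\cup K$'' really means ``$x\in A$ or $x\in K$''). The converse, once the reduction $L=\K(\h(L))$ is exploited, is a short case distinction.
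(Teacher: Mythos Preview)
Your proof is correct and, compared with the paper's argument, considerably more self-contained. The paper handles the implication ``intersection property $\Rightarrow$ arithmetic'' essentially as in your alternative formulation: it observes that under the hypothesis the map $I\mapsto\h(I)$ is a bijection onto the lattice of closed subsets of $\st$, and concludes distributivity from that; your primary argument for this direction is instead a direct verification of the distributive identity by testing against each $K\in\h(L)$, which avoids invoking the topology altogether. For the other implication, ``arithmetic $\Rightarrow$ intersection property'', the paper does not argue directly at all: it cites \cite{BF48} for the coincidence of irreducible and strongly irreducible ideals in a distributive lattice and then defers the rest to Theorems~6 and~7 of \cite{I56}, whereas you give a self-contained Zorn's-lemma construction of a strongly irreducible ideal separating any $x\notin I$ from $I$, using explicitly that in a monoid the join of ideals is their set-theoretic union. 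Your route is longer but stands on its own; the paper's is terse but leans on external sources for the substantive direction.
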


\begin{proof}
Let $I\in \id$ and let $I=\bigcap_{I\subseteq J}\{J\mid J\in \st\}.$ To show $\id$ is distributive, it suffices to show that the lattice $\id$ is isomorphic to the lattice of some closed sets of the terminal space $\st$. Note that the map $I\mapsto \{ J\in \st \mid J\supseteq I\}=\h(I)$ is a bijection and since $\h(I)$ is a closed set, this map is also an lattice isomorphism.

For the converse,  we first observe that by \cite{BF48}, in a distributive lattice, irreducible ideals and strongly irreducible ideals coincide. The rest of the proof now follows from Theorem 6 and Theorem 7 in \cite{I56}.
\end{proof}

Finally, we wish to see relations between  a terminal space and its subspaces of maximal ideals $\mathrm{Max}(M)$ and  prime ideals $\mathrm{Spec}(M)$. To do so, we first talk about radicals induced by maximal, prime, and strongly irreducible ideals of a monoid $M$. An \emph{m-radical} $\sqrt[m]{M}$ (respectively, \emph{p-radical} $\sqrt[p]{M}$ and \emph{s-radical} $\sqrt[s]{M}$) of $M$ is the intersection of all maximal ideals (respectively, prime ideals and strongly irreducible ideals) of $M$.

\begin{proposition}
Let $M$ be a monoid.
\begin{enumerate}
	
\item The subspace $\mathrm{Max}(M)$ is dense in the terminal space $\st$ if and only if $\sqrt[p]{M}=\sqrt[s]{M}.$

\item The subspace $\mathrm{Spec}(M)$  is dense in the terminal space $\st$ if and only if $\sqrt[m]{M}=\sqrt[s]{M}.$
\end{enumerate}
\end{proposition}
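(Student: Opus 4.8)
The plan is to convert density into a containment of radicals through the closure operator $\HK$, and then extract the two equivalences from the order relations among the m-, p-, and s-radicals.

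First I would isolate a single density criterion valid for every nonempty subspace $Y\subseteq\st$. By Lemma~\ref{fphk}(\ref{hkxb}), $\overline{Y}=\HK(Y)=\h(\K(Y))$, so $Y$ is dense iff $\h(\K(Y))=\st$. Every $J\in\st$ contains $\sqrt[s]{M}=\bigcap_{J\in\st}J$, whence $\st=\h(\sqrt[s]{M})$; since $\h$ is inclusion-reversing with $\h(A)=\st$ iff $A\subseteq\sqrt[s]{M}$, this yields
\[
Y\ \text{dense in}\ \st \iff \K(Y)\subseteq\sqrt[s]{M}.
\]
Taking $Y=\mathrm{Max}(M)$ and $Y=\mathrm{Spec}(M)$ and using $\K(\mathrm{Max}(M))=\sqrt[m]{M}$ and $\K(\mathrm{Spec}(M))=\sqrt[p]{M}$, the two density statements become $\sqrt[m]{M}\subseteq\sqrt[s]{M}$ and $\sqrt[p]{M}\subseteq\sqrt[s]{M}$ respectively.

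Next I would record the order among the radicals. Every prime ideal is strongly irreducible---if $I\cap J\subseteq P$ then $IJ\subseteq I\cap J\subseteq P$, and primeness forces $I\subseteq P$ or $J\subseteq P$---and the unique maximal ideal (the non-units) is itself prime, so $\mathrm{Max}(M)\subseteq\mathrm{Spec}(M)\subseteq\st$. Intersecting over a larger family gives a smaller ideal, so
\[
\sqrt[s]{M}\subseteq\sqrt[p]{M}\subseteq\sqrt[m]{M}.
\]
Because the s-radical sits at the bottom of this chain, each containment produced by the criterion is in fact an equality: $\mathrm{Max}(M)$ dense gives $\sqrt[m]{M}=\sqrt[s]{M}$, and $\mathrm{Spec}(M)$ dense gives $\sqrt[p]{M}=\sqrt[s]{M}$. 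These built-in inclusions are exactly what I would use to promote the one-sided containments to the two-sided equalities appearing in the proposition.

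Finally I would reconcile these equalities with the pairing printed in the statement by running along the chain. Since $\sqrt[m]{M}=\sqrt[s]{M}$ collapses the entire chain, it forces $\sqrt[p]{M}=\sqrt[s]{M}$, so $\mathrm{Max}(M)$ dense implies the equality in (1); symmetrically an equality $\sqrt[m]{M}=\sqrt[s]{M}$ collapses the chain and hence makes $\mathrm{Spec}(M)$ dense, giving one direction of (2). The main obstacle---and the step I would scrutinise hardest---is precisely the complementary directions of the printed pairing, namely recovering $\mathrm{Max}(M)$-density from $\sqrt[p]{M}=\sqrt[s]{M}$ and the equality $\sqrt[m]{M}=\sqrt[s]{M}$ from $\mathrm{Spec}(M)$-density. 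These run \emph{up} the chain, so they require promoting an equality low in the chain to one higher up; establishing the criterion and the chain is routine, but this upward propagation is where the inclusion relations must be pushed hardest, and where one should check whether a further hypothesis forcing the chain to collapse is implicitly in play.
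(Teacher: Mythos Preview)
Your argument is correct and is essentially the paper's own approach: convert density of $Y$ into $\h(\K(Y))=\st$, identify $\K(\mathrm{Max}(M))=\sqrt[m]{M}$ and $\K(\mathrm{Spec}(M))=\sqrt[p]{M}$, and use the inclusions $\mathrm{Max}(M)\subseteq\mathrm{Spec}(M)\subseteq\st$ to turn the resulting containments into equalities. The paper's proof of (1) literally begins ``Let $\overline{\mathrm{Spec}(M)}=\st$'' and derives $\sqrt[p]{M}=\sqrt[s]{M}$, then says (2) follows analogously---i.e.\ it actually establishes the natural pairing $\mathrm{Spec}(M)\leftrightarrow\sqrt[p]{M}$ and $\mathrm{Max}(M)\leftrightarrow\sqrt[m]{M}$, exactly what your density criterion yields.

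Your hesitation about the ``upward'' directions of the printed pairing is therefore well placed: the labels in the statement are transposed (either $\mathrm{Max}$/$\mathrm{Spec}$ or $m$/$p$ are swapped), and the paper's own proof confirms this. There is no additional idea that promotes $\sqrt[p]{M}=\sqrt[s]{M}$ to density of $\mathrm{Max}(M)$ or $\mathrm{Spec}(M)$-density to $\sqrt[m]{M}=\sqrt[s]{M}$; those implications simply need not hold, and the paper does not claim to prove them once one reads its proof. So you may stop at your clean criterion and the chain $\sqrt[s]{M}\subseteq\sqrt[p]{M}\subseteq\sqrt[m]{M}$; that \emph{is} the intended argument.
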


\begin{proof}
(1) Let $\overline{\mathrm{Spec}(M)}=\st.$ Then $\{J\in \st\mid \bigcap_{P\in \mathrm{Spec}(M)}P\subseteq J\}=\st.$ This implies
\[\sqrt[p]{M}=\bigcap_{P\in \mathrm{Spec}(M)}P\subseteq \bigcap_{J\in \st}J=\sqrt[s]{M}.\]
Furthermore, $\mathrm{Max}(M)\subseteq \st$ implies $\sqrt[s]{M}\subseteq \sqrt[p]{M}$. Hence, we have the desired equality. To obtain the converse, let $\st\setminus \overline{\mathrm{Spec}(M)}\neq \emptyset.$ This implies $J\notin \overline{\mathrm{Spec}(M)}$, but $J\in \st.$ Therefore, there exists a neighbourhood $N_J$ of $J$ such that $N_J\cap \mathrm{Spec}(M)=\emptyset,$ and $\sqrt[s]{M}\subsetneq \sqrt[p]{M}.$ In other words, we have  $\sqrt[s]{M}\neq \sqrt[p]{M}.$

(2) Follows from (1).
\end{proof}


\begin{thebibliography}{SK}
	
\bibitem[A08]{A08} A. Azizi, Strongly irreducible ideals, \emph{J. Aust. Math. Soc.}, \textbf{84} (2008), 145--154.

\bibitem[AA08]{AA08}
R. E. Atani and S. E. Atani, Ideal theory in commutative semirings, \textit{Bul. Acad.
Stiin te Repub. Mold. Mat.}, (2008), 14--23.

\bibitem[AH11]{AH11}
H. Ansari-Toroghy and D. Hassanzadeh-Lelekaami, On the prime spectrum of top modules, \textit{Algebra Discrete Math.}, \textbf{11}(1) (2011), 1--16.

\bibitem[B53]{B53} R. L.
Blair, Ideal lattices and the structure of rings, \emph{Trans. Amer. Math. Soc.}, \textbf{75} (1953),  136--153.  

\bibitem[B72]{B72} N. Bourbaki, \emph{Elements of mathematics:
Commutative
algebra}, Addison-Wesley, Reading, MA, 1972.

\bibitem[BF48]{BF48}
G. Birkhoff and O. Frink, Representations of lattices by sets, \textit{Trans. Am.
Math. Soc.}, \textbf{14} (1948), 299--316.

\bibitem[F49]{F49} L. Fuchs, \"{U}ber die Ideale arithmetischer Ringe, \emph{Comment. Math. Helv.}, \textbf{23} (1949), 334--341. 

\bibitem[G60]{G60} 
A. Grothendieck,   \emph{\'{E}l\'{e}ments de g\'{e}om\'{e}trie alg\'{e}brique I, Le langage des sch\'{e}mas}, Inst. Hautes \'{E}tudes Sci. Publ. Math., No. \textbf{4}., 1960.

\bibitem[HPH21]{HPH21} C-H Han, W-S Pae, and J-N Ho, Topological properties of the prime spectrum of
a semimodule, \textit{J. Algebra}, 566 (2021) 205--221.

\bibitem[HHP22]{HHP22}
C-H Han, J-N Ho, and W-S Pae,
Properties of the subtractive prime spectrum of a
semimodule, Hacet. J. Math. Stat.
XX(x) (2022), 1--14.

\bibitem[HRR02]{HRR02} W. J. Heinzer, L. J. Ratliff Jr., and D. E. Rush, Strongly irreducible ideals of a commutative ring, \emph{J. Pure Appl. Algebra}, \textbf{166}(3) (2002), 267--275.    

\bibitem[I56]{I56}
K. Is\'{e}ki, Ideal theory of semiring , \textit{Proc. Japan. Acad.}, \textbf{32} (1956), 554--559.

\bibitem[KES06]{KES06}
A. Khaksari, M. Ershad, and H. Sharif,
Strongly irreducible submodules of modules, \textit{Acta Mathematica Sinica}, \textbf{22}(4) (2006), 1189--1196.

\bibitem[M53]{M53} J. D. McKnight, Jr., \emph{On the characterisation of rings of functions}, Purdu doctoral thesis, 1953.

\bibitem[NS21]{NS21}
R. Naghipour and M. Sedghi, Some characterizations of strongly irreducible submodules in arithmetical and Noetherian modules,  \textit{arXiv:2101.01695v1}.

\bibitem[S16]{S16} N. Schwartz, Strongly irreducible ideals and truncated valuations, \emph{Comm. Algebra}, \textbf{44}(3) (2016), 1055--1087. 

\bibitem[Y11]{Y11}
G. Ye\c silot, On the prime spectrum of a module over noncommutative rings, Int. J. Algebra \textbf{5}(11)
(2011) 523--528. 

\bibitem[Z00]{Z00}
G. Zhang and W. Tong, Spectral spaces of top right $R$-modules, \textit{J. Nanjing Univ. Math. Biq.}, \textbf{17}(1)
(2000) 15--20.
\end{thebibliography}
\end{document}